\newcommand{\mC}{\mathcal{C}}
\newcommand{\mM}{\mathcal{M}}
\newcommand{\CC}{\mathbf{C}}
\newcommand{\RR}{\mathbf{R}}
\newcommand{\ZZ}{\mathbf{Z}}
\newcommand{\id}{\mathrm{Id}}
\newcommand{\ev}{\mathrm{ev}}
\newcommand{\Hom}{\mathrm{Hom}}
\newcommand{\MT}{\mathrm{MT}}
\newcommand{\OP}{\operatorname}
\newtheorem{thm}{Theorem}
\newtheorem{dfn}[thm]{Definition}
\newtheorem{lma}[thm]{Lemma}
\newtheorem{prp}[thm]{Proposition}
\newtheorem{thmmain}{Theorem}
\newtheorem{cormain}[thmmain]{Corollary}
\newtheorem{qun}[thm]{Question}
\title{Remarks on monotone Lagrangians in $\CC^n$}
\author{Jonathan David Evans}
\address{ETH Z\"{u}rich, R\"{a}mistrasse 101, 8092 Z\"{u}rich, Schweiz}
\email{jonny.evans@math.ethz.ch}
\author{Jarek K\k{e}dra}
\address{University of Aberdeen and University of Szczecin}
\email{kedra@abdn.ac.uk}
\begin{document}
\begin{abstract}
We derive some restrictions on the topology of a monotone Lagrangian submanifold $L\subset\CC^n$ by making observations about the topology of the moduli space of Maslov 2 holomorphic discs with boundary on $L$ and then using Damian's theorem which gives conditions under which the evaluation map from this moduli space to $L$ has nonzero degree. In particular we prove that an orientable 3-manifold admits a monotone Lagrangian embedding in $\CC^3$ only if it is a product, which is a variation on a theorem of Fukaya. Finally we prove an h-principle for monotone Lagrangian immersions.
\end{abstract}
\maketitle
\section{The results}
This paper is concerned with monotone Lagrangian embeddings into the standard symplectic vector space $\CC^n$. In the next section we will put our results into context.
\begin{dfn}\label{mono}
A Lagrangian $L\rightarrow\CC^n$ is called \emph{monotone} if there exists $K>0$ such that the symplectic area of a disc $D$ with boundary on $L$ is $K\mu(D)$ where $\mu(D)$ is its Maslov index.
\end{dfn}
\begin{thmmain}\label{simpvol}
Suppose $L$ is a closed orientable spin manifold which is a connected
sum of aspherical manifolds of odd dimension. If $L$ admits a monotone Lagrangian
embedding in $\CC^n$ then $L$ has vanishing simplicial volume. In particular all the
aspherical connected summands have vanishing simplicial volume.
\end{thmmain}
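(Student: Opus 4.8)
The plan is to study the moduli space $\mM$ of unparametrised Maslov $2$ holomorphic discs carrying one marked boundary point, with boundary on $L$, together with its evaluation map $\ev\colon\mM\to L$ sending a disc to its marked point, and then to play the positivity of the simplicial volume $\|L\|$ against the vanishing forced by a circle symmetry of $\mM$. For a generic compatible almost complex structure, monotonicity rules out disc and sphere bubbling in the relevant strata: a bubble would carry strictly smaller symplectic area, hence nonpositive Maslov index, which is impossible. Thus $\mM$ is a closed manifold of dimension $n=\dim L$, and the spin hypothesis furnishes a coherent orientation, so that $\ev$ has a well-defined integer degree. Displaceability of $L$ in $\CC^n$ forces the minimal Maslov number to be $2$, so that this is the moduli space which controls the Floer differential.

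I would then argue by contradiction, assuming $\|L\|>0$. Positivity of the simplicial volume implies that $L$ is essential: writing $c\colon L\to B\pi_1(L)$ for the classifying map, one has $\|L\|=\|c_*[L]\|$, so $c_*[L]\neq0$. Since $L\subset\CC^n$ is displaceable its Floer homology vanishes, and Damian's theorem, invoked under the spin and essentialness hypotheses, then guarantees that $\ev$ has nonzero degree.

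The remaining input is topological: $\mM$ is the total space of a circle bundle over the moduli space $\mM_0$ of unmarked Maslov $2$ discs, the fibre being the position of the marked point on the boundary circle. Equivalently $\mM$ carries a fixed-point-free smooth $S^1$-action by rotation of the marked point, which uses that minimal discs are simple, so that no nontrivial rotation of the domain preserves the disc. By Gromov's vanishing theorem a closed oriented manifold with a nontrivial $S^1$-action has vanishing simplicial volume, so $\|\mM\|=0$. Functoriality of simplicial volume under maps of nonzero degree gives $|\deg\ev|\cdot\|L\|\le\|\mM\|=0$, whence $\|L\|=0$, contradicting $\|L\|>0$. Therefore $\|L\|=0$. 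As $\dim L$ is odd and at least $3$, simplicial volume is additive under connected sum, $\|L\|=\sum_i\|L_i\|$, and vanishing of the sum forces $\|L_i\|=0$ for every aspherical summand.

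I expect the main obstacle to be the nonzero-degree statement. The delicate points are that $\mM$ really is a compact oriented manifold of the expected dimension (transversality together with the monotonicity exclusion of boundary strata, and the orientation supplied by the spin structure), and, above all, the correct formulation and verification of the hypotheses of Damian's theorem for a Lagrangian which is a connected sum of aspherical manifolds, and hence not itself aspherical: one needs the integral degree, not merely a mod-$2$ count, to be nonzero. Once the nonzero degree is secured, the circle-symmetry vanishing and the connected-sum additivity are comparatively routine.
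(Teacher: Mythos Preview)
Your overall architecture is correct and matches the paper: moduli space of Maslov~2 discs with one boundary marked point, circle action from rotating the marked point, Gromov's vanishing of simplicial volume under a circle action, functoriality of $\|\cdot\|$ under nonzero-degree maps, and additivity under connected sum. Where you go wrong is in the input you feed to Damian's theorem.

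Damian's hypothesis is \emph{not} essentialness of $L$; it is that $H_{2i+1}(\tilde{L};\ZZ/2)=0$ for all $i$, where $\tilde{L}$ is the universal cover. Your contradiction route---assume $\|L\|>0$, deduce $c_*[L]\neq 0$, then invoke Damian---therefore rests on a hypothesis Damian does not use, and the argument as written does not go through. The paper instead verifies Damian's actual hypothesis directly from the structural assumption: if $L=L_1\#\cdots\# L_k$ with each $L_i$ aspherical of odd dimension $n$, then each $\tilde{L}_i$ is contractible, so $\tilde{L}_i$ minus a ball is homotopy equivalent to $S^{n-1}$; the universal cover $\tilde{L}$ is assembled, tree-like, from such pieces and hence has reduced homology concentrated in degree $n-1$, which is even. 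Thus $H_{\mathrm{odd}}(\tilde{L})=0$ and Damian applies unconditionally. No contradiction framework is needed; the proof is direct.

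You were right to flag the passage from Damian's odd mod-$2$ count in a fixed based homotopy class to a nonzero \emph{integer} degree of $\ev$ as the genuinely delicate step. The paper devotes an entire section (its Proposition~\ref{degree}) to this: one shows that the centraliser of the relevant class $A\in\pi_1(L)$ has finite index, passes to suitable finite covers of both $\mM$ and $L$, and then argues that over the cover the signed preimage count is a positive multiple of the odd number supplied by Damian. Your proposal does not address this, and it is not a formality.
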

Note that `most' aspherical manifolds have nonvanishing simplicial volume, in particular those which admit negative curvature metrics \cite{GroVol} or more generally those with word-hyperbolic fundamental group \cite{Min}. The idea of the proof of Theorem \ref{simpvol} is that if one can `bound the complexity' of a moduli space of discs with boundary on $L$ then $L$ must be `even less complicated'. The second theorem we prove also has this flavour and is a variation on a theorem of Fukaya \cite{Fuk}.
\begin{thmmain}\label{fukimp}
If $L$ is a closed orientable 3-manifold which admits a monotone Lagrangian embedding in $\CC^3$ then $L$ is diffeomorphic to a product $S^1\times\Sigma_g$ where $\Sigma_g$ is a closed orientable surface of genus $g$.
\end{thmmain}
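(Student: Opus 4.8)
The plan is to run the strategy behind Theorem \ref{simpvol}, but to extract the full diffeomorphism type of $L$ rather than merely its simplicial volume. First I would pin down the minimal Maslov number. A closed Lagrangian $L\subset\CC^3$ is non-exact by Gromov's theorem, so the Liouville class $[\lambda|_L]\in H^1(L;\RR)$ is nonzero; monotonicity makes the Maslov class $\mu_L\in H^1(L;\ZZ)$ a positive multiple of it, whence $\mu_L\neq 0$ and in particular $b_1(L)\geq 1$. Orientability forces the Maslov index of every disc to be even, and $L$ is displaceable (it is compact), so $HF(L,L;\ZZ/2)=0$; feeding $b_1(L)\geq 1$ and $\dim L=3$ into Oh's spectral sequence rules out $N_L\geq 4$, leaving $N_L=2$. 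With $N_L=2$ in hand Damian's theorem supplies a nonzero-degree evaluation map $\ev\colon\mM\to L$, where $\mM$ is the moduli space of Maslov $2$ discs carrying one boundary marked point. Since $N_L=2$ is minimal and $\pi_2(\CC^3)=0$, no disc or sphere bubbling occurs, so for generic compatible almost complex structure $\mM$ is closed, and the spin hypothesis orients it; as $\dim\mM=3+2-2=3$, this is a degree-nonzero map of closed oriented $3$-manifolds.

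Next I would analyse the topology of $\mM$. Forgetting the marked point maps $\mM$ to the moduli $\mM_0$ of unparametrised discs, and since every Maslov $2$ disc is simple (a multiple cover would have Maslov index $\geq 4$) this exhibits each component of $\mM$ as the boundary circle bundle of a family of discs over a closed oriented surface $\mM_0$. Thus $\mM$ is a disjoint union of oriented circle bundles over surfaces, and because $\deg\ev\neq 0$ some component $E\to\mM_0=\Sigma_h$ maps to $L$ with nonzero degree $d$. The key computation is that the circle fibre $F$ of $E$ is sent by $\ev$ to the boundary $\gamma$ of a Maslov $2$ disc, so $\langle\mu_L,\gamma\rangle=2$; hence $\langle\ev^*\mu_L,[F]\rangle=2\neq 0$, so $[F]$ is nonzero in $H_1(E;\QQ)$. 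But in a circle bundle over $\Sigma_h$ the fibre is rationally nontrivial exactly when the Euler number vanishes, so $E\cong S^1\times\Sigma_h$ is the trivial bundle, and $\ev_*[F]=[\gamma]$ is non-torsion in $H_1(L;\ZZ)$.

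It remains to promote the degree-$d$ map $\ev\colon S^1\times\Sigma_h\to L$ to a diffeomorphism $L\cong S^1\times\Sigma_g$, and this is where I expect the real difficulty. The comparison $\|L\|\leq|d|^{-1}\|E\|=0$ already forces $L$ to be a graph manifold by geometrization, and the non-torsion class above prevents $L$ from being a rational homology sphere, so no spherical summands survive; together with the asphericity of the product domain (when $h\geq 1$) one expects $L$ to be prime and aspherical, with the genus-$0$ case $E=S^1\times S^2$ handled separately. The plan is then to argue that $\ev$ is $\pi_1$-injective onto a finite-index subgroup and that, after passing to the resulting Seifert structure on $L$, the detected fibre class trivialises the Euler number of $L$ itself, so that $L$ is a circle bundle of Euler number zero over a surface, with $g$ fixed by $\chi$. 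The main obstacle is precisely this last step: a nonzero-degree map does not determine its target (every $3$-manifold dominates $S^3$, for instance), so one must exploit the product structure of the moduli space and the Maslov class far more forcefully — essentially reproving the rigidity underlying Fukaya's theorem — in order to exclude nontrivial monodromy, nontrivial Euler numbers, and connected-sum summands, rather than merely bounding the complexity of $L$.
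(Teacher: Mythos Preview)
Your setup through ``$E\cong S^1\times\Sigma_h$'' is sound and in one respect cleaner than the paper: since $\mM_{0,0}$ is a closed orientable \emph{surface}, its $H_1$ is already torsion-free, so the Euler-class argument goes through directly without the finite cover the paper takes in Lemma~\ref{fuktrick}. One quibble: the detour through Oh's spectral sequence is unnecessary. Damian's theorem takes as input the vanishing of odd homology of $\tilde L$, not $N_L=2$; and that vanishing is immediate once you know $b_1(L)\geq 1$ (so $\pi_1(L)$ is infinite, $\tilde L$ is open, $H_3(\tilde L)=0$, and $H_1(\tilde L)=0$ by simple connectivity). Damian then \emph{concludes} $N_L=2$.

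The genuine gap is exactly where you flag it, and your proposed route (geometrisation $\Rightarrow$ graph manifold, hope for $\pi_1$-injectivity, Seifert structure on $L$) does not close it: a nonzero-degree map need not be $\pi_1$-injective, and ``graph manifold with $b_1\geq 1$'' is far from ``$S^1\times\Sigma_g$''. The paper's argument avoids $\pi_1$-injectivity entirely. Pass to the finite cover $K\to L$ corresponding to the \emph{image} of $\ev_*$; then $\pi_1(K)$ is a quotient of $\ZZ\times\pi_1(\Sigma_h)$ in which the $\ZZ$-factor survives (it carries the Maslov class), so $\pi_1(K)\cong\ZZ\times\Gamma'$ has nontrivial centre. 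This forces $K$ to be prime (a nontrivial free product has trivial centre), hence either $S^1\times S^2$ or irreducible aspherical; in the aspherical case $K$ is Haken (it has $b_1>0$) and Waldhausen's rigidity theorem upgrades the classifying map $K\to K(\ZZ\times\Gamma',1)=S^1\times\Sigma'$ to a homeomorphism. To descend from $K$ to $L$, the paper invokes Meeks--Scott: the deck group acts by isometries of a product metric, so $L$ is the mapping torus of a periodic isometry $\kappa$ of a surface. Finally the monodromy is killed by the Maslov class: the $S^1$-fibre of $E$ maps to $d$ times the circle factor of the mapping torus, where $d$ is the period of $\kappa$; since that fibre has Maslov index $2=N_L$, one gets $d=1$ and $\kappa=\id$. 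The three named tools --- Waldhausen, Meeks--Scott, and the Maslov-minimality trick --- are precisely what your outline is missing.
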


In Section \ref{hprin} we will see that both of these theorems can be understood as giving a lower bound on the number of double points of monotone Lagrangian immersions whose existence is guaranteed by an h-principle which we prove there. We will also show that the h-principle gives us many new examples of monotone Lagrangian embeddings by a stabilisation process observed in the non-monotone case by Audin-Lalonde-Polterovich \cite{ALP}. For example we exhibit monotone Lagrangian embeddings of $S^1\times \Sigma_g$ in $\CC^3$ for any genus $g$, so that Theorem \ref{fukimp} is `sharp'.

\section{Lagrangian embedding problems}
It is an old problem in symplectic topology to determine when a given $n$-manifold $L$ admits a Lagrangian embedding into the standard symplectic vector space $\CC^n$. If one requires only a Lagrangian immersion there is an h-principle due independently to Gromov and Lees \cite{Lees} which guarantees existence as soon as we know that the complexified tangent bundle $TL\otimes_{\RR}\CC$ is trivial. Moreover it classifies regular Lagrangian isotopy classes of immersions by the homotopy class of the Gauss map $L\rightarrow\Lambda_n$ to the Lagrangian Grassmannian. The example to bear in mind is that of a closed orientable 3-manifold, since then the tangent bundle is trivial and there is always a Lagrangian immersion.

The corresponding problem for embeddings is much harder and the first theorem on the subject was due to Gromov
\begin{thm}[Gromov \cite{Gro}]
An exact Lagrangian immersion $f:L\looparrowright\CC^n$ has at least one double point.
\end{thm}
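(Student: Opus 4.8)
The plan is to argue by contradiction, reducing the statement to the non-existence of closed exact Lagrangian embeddings. If $f:L\looparrowright\CC^n$ had no double point it would be an embedding, so it suffices to show that no closed manifold admits an exact Lagrangian embedding into $\CC^n$. I first record what exactness provides: writing $\lambda$ for the standard Liouville primitive of $\omega$ on $\CC^n$ (so $d\lambda=\omega$), exactness of $f$ means $f^*\lambda=dh$ for some $h\in C^\infty(L)$, and $\CC^n$ is contractible so that $\pi_2(\CC^n)=0$.

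Next I would show that exactness forbids non-constant holomorphic discs. Fix any $\omega$-tame almost complex structure $J$ and let $u:(D^2,\partial D^2)\to(\CC^n,L)$ be $J$-holomorphic. Since its boundary loop lies on $L$, where $\lambda$ restricts to the exact form $dh$, Stokes gives
\[
E(u)=\int_{D^2}u^*\omega=\int_{\partial D^2}u^*\lambda=\int_{\partial D^2}d(h\circ u)=0,
\]
so $u$ is constant. The same computation, together with $\pi_2(\CC^n)=0$, rules out non-constant $J$-holomorphic spheres. Consequently every moduli space of discs with boundary on $L$ consists only of constants and, crucially, no bubbling can occur, so any family of such discs is automatically compact.

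The contradiction then comes from \emph{producing} a non-constant disc by a degree argument of exactly the flavour used later in this paper for the evaluation maps of monotone Lagrangians. Concretely, I would fix $p\in L$ and study the moduli space of $J$-holomorphic discs with one boundary marked point mapped to $p$: Gromov's filling argument exploits the fact that $L$ is compact while $\CC^n$ is not to show, via a continuation/degree computation, that the boundary map $\ev$ from this compactified moduli space forces a non-constant disc through $p$ of positive symplectic area. This contradicts the vanishing of $E(u)$ above. An alternative route I would keep in reserve is Floer-theoretic: translation by a large vector is a Hamiltonian isotopy displacing $L$ from itself, whence $HF(L,L)=0$, while exactness and the absence of bubbling let one compute $HF(L,L)\cong H^*(L;\ZZ/2)\neq 0$, again a contradiction.

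The main obstacle is the disc-production step, not the compactness. Compactness is essentially free here, since exactness removes all bubbling, but forcing the existence of at least one non-constant disc requires genuine input: either Gromov's non-compactness (``pushing a disc in from infinity'') argument with the attendant transversality for the marked $\ev$ map, or the full Floer package, namely transversality, orientations over $\ZZ/2$ to sidestep sign issues, and Hamiltonian invariance. Setting up whichever of these one prefers, and verifying that the relevant degree is genuinely non-zero, is where the real work lies.
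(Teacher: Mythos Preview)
The paper does not contain a proof of this statement. Gromov's theorem is quoted in Section~2 purely as background, with a citation to \cite{Gro}, and the authors move on immediately to discuss Fukaya's and Viterbo--Eliashberg's results. There is therefore nothing in the paper to compare your proposal against.

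For what it is worth, your sketch is a reasonable outline of the standard arguments. The reduction to ruling out closed exact Lagrangian embeddings is correct, and the energy computation showing that exactness kills all non-constant discs and spheres is clean. Of the two routes you list for the contradiction, the Floer-theoretic one (displaceability forces $HF(L,L)=0$, while exactness and the absence of bubbling give $HF(L,L)\cong H^*(L;\ZZ/2)$) is the more self-contained to write down; Gromov's original argument in \cite{Gro} is closer to your option~(a), producing a disc by a filling/degeneration argument rather than via Floer homology. You are right that in either case the substantive work is the existence step, not compactness. One small point: your description of option~(a) is a bit vague --- Gromov's actual mechanism is to deform $L$ off to infinity and study what happens to a family of discs filling a large sphere, rather than a degree argument on a fixed moduli space of discs with boundary on $L$ --- but this is a matter of presentation rather than a gap.
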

Here a Lagrangian immersion $f$ is called \emph{exact} if $[f^*\lambda]=0\in H^1(L;\RR)$ where $\lambda$ is the Liouville form $\sum_{i=1}^nq_idp_i$ (satisfying $d\lambda=\omega$). When $H^1(L;\RR)=0$ (for example $L\cong S^n$) this means that there can be no Lagrangian embedding.

Note that there is also an h-principle for exact Lagrangian immersions, so Gromov's theorem tells us about the failure of the embedded version of that h-principle. To obtain results about the failure of the original (non-exact) h-principle one has to work harder. The next theorem, due to Fukaya, represents the state of the art in dimension 3.
\begin{thm}[Fukaya \cite{Fuk}]\label{fuk}
If $L$ is a closed orientable prime 3-manifold which admits a Lagrangian embedding in $\CC^3$ then $L$ is diffeomorphic to a product $S^1\times\Sigma$.
\end{thm}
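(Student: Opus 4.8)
The plan is to manufacture, out of the holomorphic disc theory of $L$, a closed auxiliary $3$-manifold that dominates $L$, and then hand the problem over to $3$-manifold topology. Since $L$ is a closed orientable $3$-manifold its tangent bundle is trivial, so $L$ is spin and parallelisable; by Gromov's theorem any Lagrangian embedding $L\subset\CC^3$ is non-exact, so $L$ bounds a nonconstant holomorphic disc for every compatible $J$. The decisive feature of the ambient space is that $\CC^3$ carries no nonconstant holomorphic spheres (its symplectic form is exact), so the only bubbling phenomenon in moduli of discs is disc bubbling, governed by the minimal Maslov number $N_L$. As $L$ is orientable $N_L$ is even, and since a compact $L\subset\CC^3$ is Hamiltonianly displaceable its self-Floer homology vanishes; the standard displaceability obstruction then forces $N_L=2$. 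Thus Maslov $2$ discs are the minimal, hence nonbubbling, ones, and the moduli space $\mM_0$ of unparametrised Maslov $2$ discs is compact of dimension $n+\mu-3=2$.

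First I would organise these discs into an evaluation map. Adding one boundary marked point produces an $S^1$-bundle $\mM\to\mM_0$ whose total space is a closed orientable $3$-manifold, together with the evaluation $\ev\colon\mM\to L$ recording the image of the marked point. The core analytic input, which is exactly the role of Damian's theorem, is that displaceability forces $\ev$ to have nonzero degree: a degree-zero evaluation would let one cap off the Floer differential and contradict the vanishing of Floer homology forced by the displacement. Restricting $\ev$ to a single fibre of $\mM\to\mM_0$ traces the boundary of a fixed Maslov $2$ disc, a loop $\gamma\subset L$ on which the Maslov class evaluates to $2$; hence $[\gamma]$ has infinite order in $H_1(L)$ and $b_1(L)\geq 1$.

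With $\ev\colon\mM\to L$ a degree-nonzero map from a circle bundle over a surface, the topology is pinned down in two stages. Because $\mM$ carries a free $S^1$-action it has vanishing simplicial volume, and simplicial volume is monotone under maps of nonzero degree, so $\|L\|=0$; by the Geometrization theorem this already forces $L$ to be a graph manifold assembled from Seifert pieces, and $b_1(L)\geq1$ rules out spherical space forms. To upgrade this to an honest product I would exploit the finer fact that $\gamma$ is the image of the circle fibre: the family of disc boundaries sweeps out a Seifert fibration of $L$ with regular fibre $\gamma$, and primeness together with $b_1\geq1$ then identifies $L$ with an $S^1$-bundle over a closed orientable surface $\Sigma$, the $S^2$-base case recovering $S^2\times S^1$.

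The two hardest steps are the nonzero-degree statement and this final triviality. For a general, possibly non-monotone, embedding the clean monotone Floer package is unavailable, so proving that $\ev$ has nonzero degree requires obstructed Floer theory over a Novikov ring, using the absence of sphere bubbling in $\CC^3$ to keep the disc moduli compact; this is the genuinely analytic heart of the argument. On the topological side the main obstacle is excluding twisted circle bundles and non-product graph manifolds with $b_1\geq1$ and zero simplicial volume: one must show that the Seifert structure coming from the disc boundaries has vanishing Euler number, so that the bundle splits and $L\cong S^1\times\Sigma$.
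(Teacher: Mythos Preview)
The paper does not prove this statement: Theorem~\ref{fuk} is quoted as Fukaya's result and cited to \cite{Fuk}. The paper's own contribution is the monotone variant, Theorem~\ref{fukimp}, whose proof replaces Fukaya's virtual machinery by Damian's theorem. So there is no in-paper proof to compare against directly; one can only compare your outline to the paper's argument for Theorem~\ref{fukimp} and to the structure of Fukaya's approach.

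Your outline has the right architecture and you correctly flag the analytic heart: without monotonicity Damian's theorem is unavailable, and establishing nonzero degree of the evaluation map genuinely requires the obstructed Floer theory of \cite{FOOO}. But one step earlier already fails. You assert that Maslov~$2$ discs are ``minimal, hence nonbubbling'' and conclude that $\mM_0$ is a smooth compact surface. This is false without monotonicity: energy and Maslov index are decoupled, so there can be positive-area discs of Maslov index $\leq 0$, and a Maslov~$2$ disc may degenerate into, say, a Maslov~$4$ disc glued to a Maslov~$-2$ disc. The Maslov~$2$ moduli space is then neither compact nor a manifold in the na\"ive sense, and one cannot even speak of orientability or of $\ev$ having a degree without the Kuranishi/virtual apparatus that Fukaya builds. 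This is not a technicality to be deferred; it is exactly why the theorem is hard and why the paper retreats to the monotone hypothesis.

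Your topological endgame is also different from, and looser than, what the paper does for Theorem~\ref{fukimp}. The claim that the disc boundaries ``sweep out a Seifert fibration of $L$'' is unjustified: $\ev$ is only a degree-nonzero map from a circle bundle, not a fibration of $L$, and the boundary circles may well intersect in $L$. The paper instead works entirely through $\pi_1$: it passes to a finite cover of the moduli space that is an honest product $S^1\times\Sigma$ (Lemma~\ref{fuktrick}), takes the finite cover $K\to L$ corresponding to the image of $\pi_1$, shows $K$ is prime because $\pi_1(K)$ has nontrivial centre, applies Waldhausen rigidity to get $K\cong S^1\times\Sigma'$, and then uses Meeks--Scott together with the minimality of the Maslov number to descend to $L$. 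Neither simplicial volume nor Geometrization is invoked there.
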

This is a technically very difficult theorem and relies on an extension of the virtual perturbation theory of \cite{FOOO}. A slightly easier theorem, proved using symplectic field theory, gives a more geometric restriction.
\begin{thm}[Viterbo, Eliashberg \cite{SFT}]\label{VitEl}
A closed orientable manifold admitting a metric of negative sectional curvature does not embed as a Lagrangian submanifold of $\CC^n$.
\end{thm}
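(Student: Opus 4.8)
The plan is to run a neck-stretching argument in symplectic field theory, exploiting the fact that the Reeb dynamics on the ideal boundary of a Weinstein neighbourhood of $L$ is the geodesic flow of the negatively curved metric. Suppose for contradiction that $L$, equipped with a metric $g$ of negative sectional curvature, admits a Lagrangian embedding $L\hookrightarrow\CC^n$. By the Weinstein neighbourhood theorem a neighbourhood of $L$ is symplectomorphic to a disc cotangent bundle $(D^*_\epsilon L, d\lambda)$, an exact filling whose contact boundary $(S^*_\epsilon L,\alpha)$ has Reeb flow equal to the cogeodesic flow; thus closed Reeb orbits are exactly the closed geodesics of $(L,g)$. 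The features of negative curvature I intend to use both come from Cartan--Hadamard: first, $L$ is aspherical with $\pi_2(L)=0$ and \emph{every} nonconstant closed geodesic represents a nontrivial free homotopy class in $L$, so there are no contractible closed geodesics; second, the geodesic flow is Anosov, so every closed Reeb orbit is nondegenerate and hyperbolic.

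Next I would use that any compact subset of $\CC^n$ is Hamiltonianly displaceable with some finite displacement energy $e(L)$; by the holomorphic curve methods introduced by Gromov \cite{Gro} this forces a nonconstant $J$-holomorphic disc with boundary on $L$ of symplectic area at most $e(L)$. I then stretch the neck along $S^*_\epsilon L$ and invoke the SFT compactness theory of \cite{SFT}: the discs converge to a holomorphic building whose pieces live in the completion $\widehat{D^*_\epsilon L}$, in the symplectization $\RR\times S^*_\epsilon L$, and in the completed complement $\widehat{\CC^n\setminus D^*_\epsilon L}$, glued along closed Reeb orbits, that is, along closed geodesics of $g$. The piece $v$ carrying the Lagrangian boundary lives in $\widehat{D^*_\epsilon L}$ and has positive punctures asymptotic to geodesics $\gamma_1,\dots,\gamma_k$. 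Crucially $v$ must be nonconstant with $k\geq 1$: since $\lambda|_L=0$, Stokes' theorem forbids a nonconstant holomorphic disc with boundary on the zero section and no punctures, so the disc genuinely reaches into the neck and escapes along honest geodesics of bounded total length.

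It remains to convert the existence of these broken configurations into a contradiction, and this is the step that genuinely uses \emph{strict} negativity of the curvature. One assembles the curves produced on both sides of $S^*_\epsilon L$ --- those in $\widehat{D^*_\epsilon L}$ filling the escaping geodesics from the Lagrangian side, and those in $\widehat{\CC^n\setminus D^*_\epsilon L}$ filling the same orbits from the outside --- into the symplectic field theory invariants of $(S^*_\epsilon L,\alpha)$, which are thereby constrained from the filling side by $D^*_\epsilon L$ and from the outside by the cobordism to the standard sphere inside $\CC^n$. Because the geodesic flow is Anosov, every generating orbit is hyperbolic and nondegenerate and none is contractible in $L$, and it is this rigid structure that is ultimately incompatible with $S^*_\epsilon L$ bounding the very simple manifold $\CC^n$ on its outer side. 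Equivalently one may route the argument through the filtered Viterbo isomorphism $SH_*(D^*_\epsilon L)\cong H_*(\mathcal{L}L)$, where the displacement energy $e(L)$ bounds the action window in which length-filtered classes may be created or destroyed.

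The hard part is twofold. First there is the analytic heart of \cite{SFT}: one must establish the transversality and compactness guaranteeing that the stretched discs limit to a building of exactly the advertised shape, and then control the Fredholm indices and homotopy classes of its components precisely enough to locate the numerical or homotopical invariant that is over-determined. Second, and more conceptually, the argument must use strict negativity rather than any coarse count: the manifold $S^1\times\Sigma_g$ has exponential growth of both its geodesics and of $H_*(\mathcal{L}L)$ and yet \emph{does} admit a monotone Lagrangian embedding in $\CC^3$, as we show in Section \ref{hprin}, so the mechanism must genuinely exploit the Anosov, contractible-orbit-free structure of the geodesic flow. This is precisely the delicate step that the elementary disc-counting methods of the present paper are designed to sidestep.
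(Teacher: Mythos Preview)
The paper does not contain a proof of this theorem. Theorem~\ref{VitEl} is quoted as a known result of Viterbo and Eliashberg, with a citation to \cite{SFT}, and serves purely as background and motivation for the paper's own results (Theorems~\ref{simpvol} and~\ref{fukimp}). There is therefore nothing in the paper to compare your proposal against.

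As a separate matter, your sketch is a fair outline of the SFT/neck-stretching mechanism behind the Eliashberg side of the attribution, and you correctly identify the two genuinely hard inputs: the SFT compactness/transversality package and the step where strict negative curvature (no contractible Reeb orbits, Anosov dynamics) is converted into an actual obstruction. But as you yourself flag, the proposal does not close the argument: the paragraph beginning ``It remains to convert\ldots'' gestures at assembling SFT invariants or invoking the Viterbo isomorphism without exhibiting the specific invariant that is over-determined. So what you have written is an accurate road map rather than a proof, and it is not something the present paper attempts or needs.
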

Note that it is very difficult to say anything about manifolds with nontrivial connected sum decompositions (i.e. non-prime) using the above theorems. Certainly the Viterbo-Eliashberg result relies on the existence of a negatively curved metric and Fukaya's theorem relies on asphericity, both of which are difficult to achieve for connected sums (except maybe for connected sums with exotic spheres \`{a} la Farrell and Jones \cite{FJ}). In particular the following question is wide open.
\begin{qun}
Can a connected sum of closed orientable hyperbolic 3-manifolds be embedded as a Lagrangian submanifold in $\CC^3$?
\end{qun}

In the present note we proceed as Gromov did and make further simplifying assumptions about the Lagrangian. We assume it is monotone in the sense of Definition \ref{mono}. This assumption will enable us to avoid much technical difficulty and will allow us to make deductions akin to Fukaya's theorem in the case where the manifold is not prime. In the last section we will observe that there is an h-principle for monotone Lagrangian immersions, which gives the results an interpretation as the failure of an h-principle: otherwise the assumption of monotonicity seems somewhat artificial.

The main ingredient is the following theorem of Damian which crucially relies on monotonicity of the Lagrangian and employs tools which are significantly easier to set up than \cite{Fuk}.
\begin{thm}[Damian {\cite[Theorem 1.5(b)-(c)]{Dam}}]
Let $M$ be a monotone symplectic manifold which has the property that any compact subset is displaceable through a Hamiltonian isotopy. Let $L\subset M$ be a monotone Lagrangian submanifold. Denote by $\tilde{L}$ the universal cover of $L$. If $L$ is orientable and has the property
\[H_{2i+1}(\tilde{L},\ZZ/2)=0\]
for any integer $i$ then $N_L=2$. Moreover, for any almost complex structure $J$ which is compatible with the symplectic form, $L$ has the property that through every $p\in L$ there is a $J$-holomorphic disk $w\colon(D,\partial D)\to(M, L)$ such that:
\begin{itemize}
\item The Maslov index $\mu(w)$ equals $2$.
\item $p\in w(\partial D)$.
\item $w(\partial D)$ is non zero in $\pi_1(L)$.
\end{itemize}
\end{thm}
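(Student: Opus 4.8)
The plan is to prove this by setting up Lagrangian Floer theory for the monotone Lagrangian $L$ in its \emph{lifted} form, with local coefficients in the group ring $\ZZ/2[\pi_1(L)]$, and then to play a displaceability vanishing against a spectral sequence whose first page is the homology of the universal cover. First I would record the easy structural facts: since $L$ is orientable every Maslov index is even, so $N_L$ is even, and since a closed $L\subset\CC^n$ must bound holomorphic discs, $N_L$ is finite and positive; hence $N_L\ge 2$. Monotonicity is what makes the rest elementary: it bounds the symplectic area of a disc by a multiple of its Maslov index, so Gromov compactness controls the moduli spaces of $J$-holomorphic discs and the only bubbling that can occur is at the minimal Maslov number, sidestepping the virtual perturbation machinery of \cite{FOOO}, \cite{Fuk}. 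Working over $\ZZ/2$ avoids orientation questions. The key device is the Biran--Cornea pearl complex, but built over the universal cover: each disc is weighted by the class of its boundary loop in $\pi_1(L)$, producing a complex of $\ZZ/2[\pi_1(L)]$-modules whose $E_1$-page, under the filtration by number of discs, is $H_*(\tilde L;\ZZ/2)$.

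The heart of the argument is to rule out $N_L\ge 4$. Because every compact subset of $M$ is Hamiltonianly displaceable, $L$ itself is displaceable by some $\phi$ with $\phi(L)\cap L=\varnothing$, so the lifted Floer complex of the pair $(L,\phi(L))$ has no generators; by the $\pi_1$-equivariant continuation invariance of the lifted theory this forces the lifted Floer homology of $L$ to vanish \emph{whenever it is defined}. If $N_L\ge 4$ there are no Maslov $2$ discs at all, the differential squares to zero, and the homology is defined and hence zero. Now I run the spectral sequence $E_1=H_*(\tilde L;\ZZ/2)\Rightarrow 0$: its $r$-th differential changes the Morse degree by $rN_L-1$, which is \emph{odd} because $N_L$ is even, whereas the hypothesis $H_{2i+1}(\tilde L;\ZZ/2)=0$ concentrates $E_1$ in even degrees. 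An odd-degree differential on an even-concentrated page is zero, so the sequence degenerates and $0=E_\infty=H_*(\tilde L;\ZZ/2)$, contradicting $H_0(\tilde L;\ZZ/2)\neq 0$. This rules out $N_L\ge 4$ and, with $N_L$ even, leaves $N_L=2$.

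For the disc-existence statement I would exploit that at $N_L=2$ the differential no longer squares to zero: instead $D^2=\mathfrak{m}_0\cdot\id$, where $\mathfrak{m}_0\in\ZZ/2[\pi_1(L)]$ is the count of Maslov $2$ discs through a generic point, weighted by boundary class. If $\mathfrak{m}_0=0$ the homology would again be defined, vanish by displaceability, yet equal $H_*(\tilde L;\ZZ/2)\neq 0$ by the same degeneration (the shift $2r-1$ is again odd); so $\mathfrak{m}_0\neq 0$, giving a Maslov $2$ disc through a generic point. The non-triviality of the $\pi_1$-weighting—that $\mathfrak{m}_0$ is not supported at $1\in\pi_1(L)$—is precisely what excludes contractible boundaries and yields the third bullet. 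Finally I would upgrade ``generic point'' to ``every point'' by Gromov compactness: the evaluation map $\ev$ from the compact moduli space of Maslov $2$ discs carrying one boundary marked point is a pseudocycle of dimension $\dim L$ representing a nonzero multiple of the fundamental class, hence surjective onto $L$, and through each $p$ one obtains $w$ with $p\in w(\partial D)$.

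The main obstacle is the foundational construction of the lifted theory rather than any single estimate: defining the $\ZZ/2[\pi_1(L)]$-twisted disc counts with the requisite transversality, and proving the displaceability vanishing in a genuinely $\pi_1$-equivariant way, so that the continuation maps are themselves built over the universal cover and respect the group-ring grading. A closely related delicate point is the bookkeeping of the curvature term $\mathfrak{m}_0$ in the $N_L=2$ case, which must be handled so that the spectral-sequence comparison remains legitimate even though honest homology is unavailable, and so that one can certify that $\mathfrak{m}_0$ has a component at a \emph{nontrivial} element of $\pi_1(L)$ rather than merely being nonzero. Once these foundations are in place, the degree-parity mechanism above does the real work.
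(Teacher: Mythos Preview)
The paper does not prove this statement; it is quoted from Damian \cite[Theorem~1.5(b)--(c)]{Dam} and used as a black-box input, so there is no proof in the paper to compare your proposal against. The only gloss the paper adds is the sentence immediately following the statement, recording that Damian in fact produces, for generic $J$ and a generic point $x_0$, a class $A\in\pi_2(M,L,x_0)$ containing an odd finite number of Maslov~$2$ discs through $x_0$.

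For what it is worth, your sketch is an accurate outline of Damian's own argument in \cite{Dam}: Floer theory with coefficients in $\ZZ/2[\pi_1(L)]$ (equivalently, on the universal cover), vanishing by Hamiltonian displaceability, the spectral sequence with $E_1=H_*(\tilde L;\ZZ/2)$, the parity mechanism (the $r$-th differential shifts degree by $rN_L-1$, which is odd when $N_L$ is even, so an $E_1$ concentrated in even degrees forces degeneration), and the curvature element $\mathfrak m_0\in\ZZ/2[\pi_1(L)]$ in the $N_L=2$ case. The two points you single out as the real obstacles---setting up the $\pi_1$-equivariant theory with its invariance under Hamiltonian isotopy, and certifying that $\mathfrak m_0$ is supported away from $1\in\pi_1(L)$---are precisely what occupies the body of \cite{Dam}; you have correctly located where the substance lies rather than papered over it. Your final passage from ``generic $p$, generic $J$'' to ``every $p$, any compatible $J$'' via Gromov compactness is the standard closing move and is consistent with how the present paper invokes the result.
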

In fact Damian shows that for generic $J$ and a fixed generic point $x_0\in L$ there is a homotopy class $A\in\pi_2(M,L,x_0)$ of Maslov 2 discs with boundary on $L$ which contains an odd, finite number of holomorphic discs through $x_0$. See the last paragraph of the proof of Theorem 1.5 on {\cite[Page 453]{Dam}}. For the remainder of this paper we will take $M$ to be the standard symplectic vector space $\CC^n$, so
\[\pi_2(\CC^n,L,x_0)\cong\pi_1(L,x_0).\]
\begin{prp}\label{degree}
Suppose that $L\subset\CC^n$ is oriented and spin. Let $\beta$ denote the free homotopy class of loops containing the class $A\in\pi_1(L,x_0)$ and let $\mM_{0,1}(\beta,J)$ denote the moduli space of $J$-holomorphic discs with one boundary marked point representing the free homotopy class $\beta$. Then the evaluation map $\ev\colon\mM_{0,1}(\beta,J)\to L$ has nonzero degree over the integers.
\end{prp}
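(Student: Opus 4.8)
The plan is to exhibit $\mM_{0,1}(\beta,J)$ as a closed oriented manifold of dimension $n=\dim L$ and then to compute the degree of $\ev$ by reducing it modulo $2$ and appealing to Damian's odd count. First I would pin down the dimension. Monotonicity together with $N_L=2$ forces every Maslov $2$ disc to be simple, since a $d$-fold cover would have Maslov index $2d$; hence for generic compatible $J$ the parametrised moduli space is cut out transversally, of dimension $n+\mu(\beta)=n+2$, and the reparametrisation group $\Aut(D)\cong\mathrm{PSL}(2,\RR)$ acts freely on it. Quotienting by this $3$-dimensional group and recording the boundary marked point gives $\dim\mM_{0,1}(\beta,J)=n+2-3+1=n$, matching $\dim L$ so that a degree can even be defined.

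Next I would prove compactness. Each disc in $\beta$ has area $2K$ by monotonicity, so Gromov compactness applies and the only possible limits are bubbled configurations. Sphere bubbles are excluded outright because $\pi_2(\CC^n)=0$, so there are no non-constant holomorphic spheres. A boundary node would break the disc into components whose Maslov indices sum to $2$; as $L$ is oriented these indices are even, and a non-constant component has index at least $N_L=2$, so at most one component can be non-constant. The remaining ghost components carry too few special points to be stable and are therefore contracted, so the limit is again a single Maslov $2$ disc. Thus the Gromov compactification adds nothing and $\mM_{0,1}(\beta,J)$ is already a closed manifold without boundary.

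The spin hypothesis enters precisely in the orientation, and it is what upgrades the degree from a mod $2$ to an integral invariant. Since $L$ is oriented and spin, the coherent orientation theory of Fukaya--Oh--Ohta--Ono \cite{FOOO} shows $\mM_{0,1}(\beta,J)$ to be orientable, and a choice of spin structure orients it. Hence $\ev\colon\mM_{0,1}(\beta,J)\to L$ is a map between closed oriented $n$-manifolds and carries a well-defined degree in $\ZZ$, whose reduction modulo $2$ equals the mod $2$ cardinality of any generic fibre $\ev^{-1}(x_0)$.

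It remains to see that this fibre count is odd, and here I would invoke Damian's refinement \cite{Dam}, which produces a based class $A$ carrying an odd number of Maslov $2$ discs through $x_0$. The subtle point, which I expect to be the main obstacle, is that $\ev^{-1}(x_0)$ records the whole free class $\beta$, that is every based representative conjugate to $A$, whereas Damian counts only $A$ itself; so I would need to check that the contributions of the other conjugates cancel in pairs modulo $2$ (or do not arise through the generic point chosen), leaving the total fibre count odd. Granting this reconciliation of the based and free counts, $\deg(\ev)$ is odd and in particular non-zero, as required. This matching of Damian's invariant with the integral evaluation degree, rather than the by-now-standard transversality and orientation package, is the step I anticipate needing the most care.
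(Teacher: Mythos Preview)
You have correctly located the crux of the argument: the fibre $\ev^{-1}(x_0)$ is indexed by the \emph{free} class $\beta$, whereas Damian's odd count pertains to a single \emph{based} class $A$ inside that conjugacy class. But you do not actually resolve this discrepancy, and your proposed fix---that the contributions of the other conjugates $gAg^{-1}\neq A$ cancel in pairs modulo $2$---has no evident justification. There is no involution or symmetry in sight forcing discs through $x_0$ in a given conjugate class to come in even numbers, and Damian's argument singles out $A$ without constraining its conjugates at all. So the step you explicitly ``grant'' is precisely the substance of the proposition, and your outline stops short of it.

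The paper's resolution is quite different from a parity cancellation. It first shows, via a cobordism argument (Lemma~\ref{topo}) that already uses Damian's odd count in an essential way, that the centraliser of $A$ in $\pi_1(L,x_0)$ has \emph{finite} index. One then passes to the finite cover $\bar{L}\to L$ associated to this centraliser, and to a compatible finite cover of the moduli space (whose finiteness in turn relies on Fukaya's trick, Lemma~\ref{fuktrick}), so that $\ev$ lifts to a map $\bar{f}$ between these covers. The point of the construction is that every preimage of a fixed lift $\bar{x}_0$ under $\bar{f}$ projects to a disc whose boundary lies in the \emph{single} based class $A$; hence Damian's odd signed count computes $\deg(\bar{f})$ up to a positive multiplicity, and multiplicativity of degree under finite covers then gives $\deg(\ev)\neq 0$. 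Note that this does \emph{not} show $\deg(\ev)$ is odd, only that it is nonzero. The authors themselves remark in their acknowledgements that they initially missed this based-versus-free issue, so the gap you flagged is a real one and is exactly where the work lies.
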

The proposition makes sense because, for generic $J$, the moduli space is a compact, smooth manifold of dimension $n=\dim(L)$. Moreover the moduli space is orientable as $L$ is spin \cite{FOOO}, hence we can make sense of its fundamental class over the integers.

\section{Proof of Proposition \ref{degree}}\label{topology}

Observe that for generic $J$ the moduli space is a compact, smooth, orientable manifold of dimension $n$. Let $\mM_{0,0}(\beta,J)$ denote the moduli space of unmarked $J$-holomorphic discs with boundary on $L$ in the class $\beta$ and let $E$ denote the space of parametrised $J$-holomorphic discs with boundary on $L$ in the class $\beta$, so that $\mM_{0,0}(\beta,J)=E/\mathbf{P}SL(2,\RR)$. We have
\[\mM_{0,1}(\beta,J)=E\times_{\mathbf{P}SL(2,\RR)} S^1\]
and forgetful map $\mM_{0,1}(\beta,J)\to\mM_{0,0}(\beta,J)$ is the projection map of this associated bundle.

Since both the total space and the base of this bundle are oriented, the fibres are oriented and so its structure group is $\OP{Diff}^+(S^1)$. We pick a reduction of this structure group to $SO(2)$ using the fact that $\OP{Diff}^+(S^1)\simeq SO(2)$, making $\mM_{0,1}(\beta,J)\to\mM_{0,1}(\beta,J)$ into a principal circle bundle.

Let $y\in\mM_{0,1}(\beta,J)$ be a basepoint. The fibre circle through $y$ represents a central element of the fundamental group. To see this, note that if $S^1\to P\to B$ is a principal circle bundle then $\pi_1(P)$ is generated by lifts $\tilde{x}_i$ of generators $x_i$ of $\pi_1(B)$ and the class $\sigma$ of the fibre. The commutator of $\sigma$ with $\tilde{x}_i$ is $\tilde{x}_i\sigma\tilde{x}_i^{-1}=\sigma$ since this loop is obtained by transporting the fibre around a loop in the base and the monodromies of a principal circle bundle preserve the orientation of the fibre. Therefore $\sigma$ is central.

We will abstract the situation, forgetting about holomorphic discs and Lagrangian submanifolds, and just think of the following setting:
\begin{itemize}
\item $M$ and $N$ are compact, smooth, oriented manifolds of dimension $n$. Moreover we may as well assume that $M$ is connected,
\item $M$ is the total space of a principal $S^1$-bundle and the action of $t\in S^1$ is written $y\mapsto t\cdot y$,
\item if $y\in M$ is a basepoint then the orbit $\{t\cdot y:t\in S^1\}$ represents a central element of $\pi_1(M,y)$.
\item $f\colon M\to N$ is a smooth map.
\end{itemize}
For each $y\in M$ we write $L_y\colon S^1\to N$ for the loop $L_y(t)=f(t\cdot y)$. The relevant property of $f$ coming from Damian's theorem is the existence of a regular value $x_0\in N$ and a homotopy class $A\in\pi_1(N,x_0)$ such that
\begin{equation}\label{keyprop}\#\{y\in f^{-1}(x_0)\colon [L_y]=A\}\equiv 1\mod 2.\end{equation}
\begin{lma}\label{topo}
In this situation, the centraliser of $A$ in $\pi_1(N,x_0)$ is of finite-index.
\end{lma}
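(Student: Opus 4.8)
The plan is to reformulate the finite-index assertion as a compactness statement about a covering space and then detect that compactness by a mod-$2$ degree argument fed by \eqref{keyprop}. Let $G=\pi_1(N,x_0)$, let $Z(A)\le G$ be the centraliser of $A$, and let $q\colon\hat N\to N$ be the connected covering corresponding to $Z(A)$, with a chosen lift $\hat x_0$ of $x_0$ so that $\pi_1(\hat N,\hat x_0)=Z(A)$. Then $Z(A)$ has finite index in $G$ exactly when $q$ has finitely many sheets, i.e.\ when $\hat N$ is compact. So it suffices to lift $f$ to $\hat f\colon M\to\hat N$ and argue that the existence of this lift together with \eqref{keyprop} forces $\hat N$ to be compact.

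For the lift to exist I must first establish the inclusion $f_*\pi_1(M,y)\subseteq Z(A)$, which is the conceptual heart of the matter. Fix $y\in f^{-1}(x_0)$ with $[L_y]=A$ (such a point exists, since \eqref{keyprop} makes this set nonempty), and let $\delta$ be any loop in $M$ based at $y$. I would consider $G(s,t)=f(t\cdot\delta(s))$ on $[0,1]\times S^1$: because $\delta(0)=\delta(1)=y$ the two boundary circles $t\mapsto G(0,t)$ and $t\mapsto G(1,t)$ both trace $L_y$ and so represent $A$, while the edge $s\mapsto G(s,0)=f(\delta(s))$ represents $f_*[\delta]$. Hence $G$ descends to a torus $T^2\to N$ carrying the two standard generators to $A$ and to $f_*[\delta]$; these classes therefore commute and $f_*[\delta]\in Z(A)$. (This is the usual argument that the orbit class of a connected group action is central, and it uses only that the $S^1$-action is honest.) The same style of homotopy, now sliding an orbit along a path $p$ from $y$ to an arbitrary $y'\in f^{-1}(x_0)$, gives $[L_{y'}]=[f\circ p]^{-1}A\,[f\circ p]$, a conjugate of $A$; the inclusion just proved makes this expression independent of $p$.

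With the inclusion in hand, $f$ lifts to $\hat f\colon M\to\hat N$ with $\hat f(y)=\hat x_0$, and $\hat x_0$ is a regular value since $q$ is a local diffeomorphism. Unwinding path lifting, for $y'\in f^{-1}(x_0)$ the point $\hat f(y')$ is the endpoint of the $q$-lift of $f\circ p$ starting at $\hat x_0$, so $\hat f(y')=\hat x_0$ precisely when $[f\circ p]\in Z(A)$, i.e.\ precisely when $[L_{y'}]=A$. Thus $\hat f^{-1}(\hat x_0)$ is exactly the set counted in \eqref{keyprop}, of odd cardinality. I would then run the standard Pontryagin cobordism argument: if $\hat N$ were noncompact then, $M$ being compact, $\hat f(M)$ would be a compact proper subset, so I could pick a regular value $\hat x_1\in\hat N\setminus\hat f(M)$ and a smooth arc $c$ from $\hat x_0$ to $\hat x_1$ with $\hat f\pitchfork c$; then $\hat f^{-1}(c)$ is a compact $1$-manifold with boundary $\hat f^{-1}(\hat x_0)\sqcup\varnothing$, forcing $\#\hat f^{-1}(\hat x_0)$ to be even and contradicting \eqref{keyprop}. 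Hence $\hat N$ is compact and $Z(A)$ has finite index.

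I expect the inclusion $f_*\pi_1(M,y)\subseteq Z(A)$ to be the main obstacle: once $\hat f$ exists and its fibre over $\hat x_0$ is identified with the odd set of \eqref{keyprop}, the rest is formal. The only remaining care is the transversality needed to produce $\hat x_1$ and the arc $c$ inside the possibly noncompact $\hat N$, which is routine.
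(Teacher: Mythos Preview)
Your argument is correct and is genuinely different from the paper's. The paper argues by contradiction: assuming the centraliser has infinite index, it chooses infinitely many loops $\gamma_i$ giving pairwise distinct conjugates $A_i=[\gamma_i]A[\gamma_i]^{-1}$, builds for each a $1$-dimensional cobordism $W_i=\{(y,t):f_i(y)=\gamma_i(t)\}$ after a perturbation, and uses the odd count from \eqref{keyprop} together with the classification of compact $1$-manifolds to produce, for each $i$, a point $y_1\in f^{-1}(x_0)$ with $[L_{y_1}]=A_i$; since $f^{-1}(x_0)$ is finite this is impossible. Your route instead proves directly that $f_*\pi_1(M,y)\subseteq Z(A)$ via the torus $(s,t)\mapsto f(t\cdot\delta(s))$, lifts $f$ to the $Z(A)$-cover $\hat N$, identifies $\hat f^{-1}(\hat x_0)$ with the odd set in \eqref{keyprop}, and then uses the invariance of mod-$2$ degree to conclude that $\hat N$ is compact.

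What each approach buys: the paper's argument is hands-on and stays in $N$, but it has to manage perturbations $f_i$ and the bookkeeping of which boundary points of $W_i$ carry the class $A$. Your approach is cleaner and more conceptual, and it yields the stronger intermediate statement $f_*\pi_1(M,y)\subseteq Z(A)$ (equivalently, the $S^1$-orbit class is central in $\pi_1(M)$, not merely of finite-index centraliser); this in fact makes the later lemmas in the paper about lifting to centraliser covers almost immediate. The only care needed in your argument, as you note, is the routine transversality justifying the mod-$2$ cobordism between $\hat f^{-1}(\hat x_0)$ and an empty fibre when $\hat N$ is noncompact; since $M\times[0,1]$ is compact this goes through exactly as in Milnor's treatment, with no compactness assumption on $\hat N$.
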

\begin{proof}
Suppose it is not and let $\gamma_1,\gamma_2,\ldots$ be an infinite sequence of embedded loops $\gamma_i\colon[0,1]\to N$ such that the classes $A_i=[\gamma_i]\cdot A\cdot[\gamma_i]^{-1}$ are not pairwise homotopic. There exists a ball $U_i$ centred at $x_0$ and an arbitrarily small perturbation $f_i$ of $f$ which agrees with $f$ on $f^{-1}(U_i)$, is transverse to $\gamma_i$ and still satisfies Equation \eqref{keyprop}. The cobordisms
\[W_i=\{(y,t)\in M\times[0,1]\colon f_i(y)=\gamma_i(t)\}\]
are then smooth, compact 1-dimensional manifolds. By transporting along $\gamma_i$ we identify $\pi_1(N,\gamma_i(t))$ with $\pi_1(N,x_0)$. With this, $W_i$ decomposes into a union $\bigcup_{c\in\pi_1(N,x_0)}W_i(c)$ where $(y,t)\in W_i(c)$ if $[L_y]$ transports back along $\gamma_i$ to $c$ based at $x_0$.

By Equation \eqref{keyprop} $W_i(A)\cap M\times\{0\}$ consists of an odd number of points and hence so does $W_i(A)\cap M\times\{1\}$. In particular there exists an embedded interval $\iota_i\colon [0,1]\to W_i$ with $\iota_i(0)\in M\times\{0\}$ and $\iota_i(1)\in M\times\{1\}$. Write $\OP{pr}_M$ for the projection of $M\times[0,1]$ to $M$. If $y_t=\OP{pr}_M(\iota_i(t))$ then we have a free homotopy of loops $L_{y_0}$ to $L_{y_1}$ and the basepoints traverse the loop $f_i\circ\OP{pr}_M\circ\iota_i$ in $N$ which is homotopic to $\gamma_i$. Therefore
\[[L_{y_1}]=[\gamma_i]\cdot[L_{y_0}]\cdot[\gamma_i]^{-1}=A_i\]
But $y_1\in f^{-1}(x_0)$ so there are only finitely many possibilities for $y_1$. This is a contradiction to the claim that $A_i$ runs over infinitely many distinct homotopy classes.
\end{proof}
Consider the centraliser subgroup of $A$, $Z(A)\subset\pi_1(N,x_0)$, and let $k\colon(\bar{N},\bar{x}_0)\to(N,x_0)$ be the based cover such that $k_*\pi_1(\bar{N},\bar{x}_0)=Z(A)$. Let $y\in f^{-1}(x_0)$ be such that $[L_y]=A$. Since the centraliser of the class of the loop $\{t\cdot y\}$ is the whole of $\pi_1(M,y)$, the image $f_*\pi_1(M,y)$ is contained in $Z(A)$ and so the map $f$ lifts to a map $\bar{f}\colon M\to\bar{N}$ taking $y$ to $\bar{x}_0$.
\begin{lma}
The preimage $\bar{f}^{-1}(\bar{x}_0)\subset M$ consists of those $y'\in f^{-1}(x_0)$ such that $[L_y]=A$.
\end{lma}
\begin{proof}
If $y'\in f(x_0)$ satisfies $[L_y]=A$ then any path $\delta$ in $M$ connecting $y$ and $y'$ becomes a loop $f\circ\delta$ in $N$ which centralises $A$. Therefore it lifts to a loop $\bar{f}\circ\delta$ based at $\bar{x}_0$, proving that $y'\in\bar{f}^{-1}(\bar{x}_0)$.

If $y'\in f^{-1}(x_0)$ does not satisfy $[L_y]=A$ and $\delta$ is a path in $M$ connecting $y$ to $y'$ then $f\circ\delta$ is a loop in $M$ which does not centralise $A$. This implies that $\bar{f}\circ\delta$ is a path in $\bar{N}$ connecting $\bar{x}_0$ to some other point in $k^{-1}(x_0)$, so $y'\not\in\bar{f}^{-1}(\bar{x}_0)$.
\end{proof}

Now Equation \ref{keyprop} implies that the degree of $\bar{f}$ is odd, in particular nonzero, so the degree of the composite $k\circ\bar{f}$ is nonzero.

This proves Proposition \ref{degree}.\qed

\section{Proofs of Theorems}

We need a preliminary lemma.

\begin{lma}[Fukaya's trick {\cite[Remark 12.2(A)]{Fuk}}]\label{fuktrick}
The moduli space $\mM_{0,0}(\beta,J)$ admits a finite cover $\pi\colon\widetilde{\mM}_{0,0}(\beta,J)\to\mM_{0,0}(\beta,J)$ such that the pullback bundle
\[
\begin{CD}
\widetilde{\mM}_{0,1}(\beta,J)@>>>\mM_{0,1}(\beta,J)\\
@VVV @VVV\\
\widetilde{\mM}_{0,0}(\beta,J)@>>{\pi}>\mM_{0,0}(\beta,J)
\end{CD}
\]
is diffeomorphic to a product $\widetilde{\mM}_{0,1}(\beta,J)\cong\widetilde{\mM}_{0,0}(\beta,J)\times S^1$. If $n=3$ then we do not need to take a finite cover.
\end{lma}
\begin{proof}
Let $\widetilde{\mM}_{0,0}(\beta,J)$ be the finite cover whose first homology is torsionfree and let $\widetilde{\mM}_{0,1}(\beta,J)$ denote the pullback circle bundle. If $n=3$ then $\widetilde{\mM}_{0,0}(\beta,J)$ is an orientable two-manifold and hence its first homology is already torsionfree.

Consider the Gysin spectral sequence of the circle bundle $\widetilde{\mM}_{0,1}(\beta,J)\rightarrow\widetilde{\mM}_{0,0}(\beta,J)$. By our choice of cover, the universal coefficients theorem implies that
\[H^2(\widetilde{\mM}_{0,0}(\beta,J);\ZZ)\cong\OP{Hom}(H_2(\widetilde{\mM}_{0,0}(\beta,J);\ZZ),\ZZ)\]
and the Euler class of the circle bundle is identified with the $E_2$-differential
\[d_2\colon H_2(\widetilde{\mM}_{0,0}(\beta,J);\ZZ)\to H_1(S^1;\ZZ)\cong\ZZ\]
in the homology Gysin spectral sequence. If the Euler class were nonzero then the homology class of the fibre would be torsion. However, the homology class of the fibre survives as a nontorsion element in the homology of $\widetilde{\mM}_{0,1}(\beta,J)$ because there is a nontrivial cohomology class (the pullback of the Maslov class along the evaluation map) which evaluates nontrivially on it. Therefore the Euler class of this circle bundle is zero and hence it is a product.
\end{proof}

\subsection{Proof of Theorem \ref{simpvol}}
Let $L$ be a Lagrangian in $\CC^n$ satisfying the assumptions of the theorem. Then its universal cover has vanishing homology in odd degrees and hence satisfies the assumptions of Damian's theorem so by Proposition \ref{degree} there is a component $M$ of the moduli space of Maslov 2 discs with boundary on $L$ and one boundary marked point such that the evaluation map $\ev:M\rightarrow L$ has nonzero degree.

Since the moduli space $\mM_{0,1}(\beta,J)$ is an $S^1$-bundle over the unmarked moduli space we see that the simplicial volume of $\mM_{0,1}(\beta,J)$ vanishes (i.e. the fundamental class can be represented by $\RR$-chains with arbitrarily small $\ell^1$-norm). Hence the simplicial volume of $M$ vanishes.

Since $\ev:M\rightarrow L$ has nonzero degree one can pushforward an infimising sequence of $\RR$-chains representing $[M]$ to obtain a sequence of $\RR$-chains representing a fixed multiple of $[L]$ and the $\ell^1$-norm of these still tends to zero. Hence $L$ has vanishing simplicial volume.

Note that simplicial volume is additive under connected sum {\cite[Section 0.2]{GroVol}}, hence we can deduce that all summands have vanishing simplicial volume. \qed

\subsection{Proof of Theorem \ref{fukimp}}
We start with the observation that any orientable Lagrangian in $\CC^3$ has infinite fundamental group. This is because
\[H^1(L;\RR)=\Hom(\pi_1(L);\RR)\]
has nonzero rank by {\cite[Theorem $0.4.A_2$]{Gro}}. Therefore the universal cover $\tilde{L}$ has no homology in odd degrees and hence $L$ satisfies the hypotheses of Damian's theorem. This means that (for generic $J$) there is a free homotopy class $\beta$ of loops in $L$ with Maslov number 2 such that the moduli space $\mM_{0,1}(\beta,J)$ of $J$-holomorphic discs with boundary on $L$ representing $\beta$ and with one boundary marked point is a compact smooth orientable manifold of dimension 3. By Proposition \ref{degree} there is a component $M\subset\mM_{0,1}(\beta,J)$ such that restriction of the the evaluation map
\[\mM_{0,1}(\beta,J)\rightarrow L\]
to $M$ has nonzero degree. We saw in Lemma \ref{fuktrick} that $M$ is diffeomorphic to a product.

The remainder of the proof is similar in spirit to that of Fukaya. Let $K$ be the cover of $L$ corresponding to the subgroup $\ev_*\pi_1(\bar{M})$. The map $\ev$ has nonzero degree but factors through the covering $K\to L$; since degree is multiplicative under composition we see that this covering has finite degree so $K$ is compact. Therefore $L$ has a finite cover $K\rightarrow L$ whose fundamental group is a quotient of $\pi_1(M)\cong\ZZ\times\Gamma$ where $\Gamma=\pi_1(\Sigma)$ and $\Sigma$ is the unmarked moduli space. Since the generator of $\ZZ\times\{0\}$ survives in $\pi_1(L)$ (it has nontrivial Maslov number) the quotient in question has the form $\ZZ\times\Gamma'$.

If this cover were not prime then its fundamental group would admit a decomposition as a free product and hence its centre would be trivial. However the center of $\ZZ\times\Gamma'$ contains $\ZZ\times\{0\}$. Therefore $K$ is prime. Either $K$ is $S^1\times S^2$ or else it is aspherical. Suppose $K$ is aspherical (and therefore irreducible). Since it is a finite cover of a manifold with positive first Betti number it is \emph{sufficiently large} in the sense of Waldhausen \cite{Wal}. By Waldhausen's rigidity theorem \cite{Wal} the classifying map $K\rightarrow K(\ZZ\times\Gamma',1)=S^1\times\Sigma$ is homotopic to a homemorphism. Hence $K$ is diffeomorphic to a product.

In the case when $\Sigma=S^2$ we see that the only orientable 3-manifold covered by $S^1\times S^2$ is $S^1\times S^2$. Hence $L$ is a product.

In the case when $\Sigma$ is aspherical we see by a theorem of Meeks and Scott {\cite[Theorem 2.1]{MS}} that the finite group action preserves some Thurston (product) metric on $S^1\times\Sigma$. In particular, $L$ is the mapping torus of a periodic diffeomorphism (indeed isometry) $\kappa$ of a free quotient $\Sigma'$ of $\Sigma$. We now claim that $\kappa=\id$. To see this, write $\MT(\kappa)$ for the mapping torus and contemplate the commutative diagram for the quotient
\[\begin{CD}
S^1\times\Sigma@>>>\MT(\kappa)\\
@VVV @VV{\pi}V\\
S^1 @>>> S^1
\end{CD}\]
The bottom map $S^1\rightarrow S^1$ has degree equal to the period of $\kappa$. Say this period is $d$. Since $\pi_1(S^1\times\Sigma)=\ZZ\times\Gamma'$ and $(1,\id_{\Gamma'})$ is sent to an element $A\in\pi_1(L)$ representing the free homotopy class $\beta$ we see that $\pi_*A=d[S^1]$. Since the Maslov class of $\beta$ is 2 which is minimal, $d=1$. Hence $\kappa=\id$ and $L$ is a product.\qed

\subsection{Remarks on Theorem \ref{fukimp}}
As Fukaya points out, the conclusion of Theorems \ref{fuk} and \ref{fukimp} does not hold for general Lagrangians (without either the primeness or monotonicity assumption). Any orientable 3-manifold immerses with a finite number of transverse double points as a Lagrangian in $\CC^3$ by the Gromov-Lees h-principle \cite{Lees} and the double points can be removed (orientably) by Lagrangian surgery \cite{Pol}. Therefore any 3-manifold embeds as a Lagrangian after taking the connected sum with $S^1\times S^2$ some finite number of times.
\begin{cormain}
Let $L$ be a closed orientable 3-manifold and $L\looparrowright\CC^3$ a monotone Lagrangian immersion with $k$ double points. Unless $L\cong S^3$ and $k=1$, no Lagrangian surgery of the image resolving all the double points is Lagrangian isotopic to a monotone embedding.
\end{cormain}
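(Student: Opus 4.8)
The plan is to combine the topological effect of Lagrangian surgery, already recorded in the preceding remark, with Theorem \ref{fukimp} and the uniqueness of prime decompositions, and to read the exceptional case off the bookkeeping of prime summands.

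First I would record precisely the effect of resolving one double point. At a transverse double point of the connected immersed $3$-manifold $f\colon L\looparrowright\CC^3$ the two local sheets meet at two distinct points $p_+,p_-\in L$; the Polterovich surgery \cite{Pol} resolving it excises small balls around $p_\pm$ and glues in a Lagrangian handle diffeomorphic to $S^2\times[0,1]$. As $L$ is connected, $p_+$ and $p_-$ lie in the same component, so this is the attachment of a $1$-handle, i.e. a self connected sum; performed orientably (as in the preceding remark) it replaces $L$ by $L\#(S^1\times S^2)$. Assuming $k\ge1$ (otherwise $f$ is already an embedding) and iterating over all $k$ double points, any orientable resolution $L'$ of the image is diffeomorphic to
\[L'\cong L\#\underbrace{(S^1\times S^2)\#\cdots\#(S^1\times S^2)}_{k}.\]

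Next I would argue by contradiction. Suppose $(L,k)\neq(S^3,1)$ but some resolution $L'$ is Lagrangian isotopic to a monotone embedding. A Lagrangian isotopy is in particular a diffeomorphism of the images, so $L'$ is a closed orientable $3$-manifold admitting a monotone Lagrangian embedding in $\CC^3$, and Theorem \ref{fukimp} forces $L'\cong S^1\times\Sigma_g$ for some $g$. Such a product is always prime: for $g\ge1$ it is aspherical, hence irreducible, while for $g=0$ it is the prime manifold $S^1\times S^2$. I would then invoke the uniqueness of the prime decomposition of closed orientable $3$-manifolds: the decomposition of $L\#k(S^1\times S^2)$ consists of the prime summands of $L$ together with exactly $k$ copies of $S^1\times S^2$. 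For the total to reduce to a single prime summand one needs $L\cong S^3$, so that $L$ contributes no nontrivial summand, and $k=1$. This contradicts the assumption $(L,k)\neq(S^3,1)$ and proves the corollary; conversely, when $L\cong S^3$ and $k=1$ the resolution is the genuinely embeddable product $S^1\times S^2=S^1\times\Sigma_0$, which accounts for the exception.

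The main obstacle I anticipate is orientability: Theorem \ref{fukimp} applies only to orientable $L'$, so I must make sure every resolution under consideration is orientable, which is exactly the content of the orientable Lagrangian surgeries of the preceding remark. Granting that, the argument is essentially formal, the one point requiring care being the treatment of $S^3$ as the identity for connected sum --- precisely what singles out the exceptional pair $(S^3,1)$.
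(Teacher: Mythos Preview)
Your argument is correct. It follows the same outline as the paper's proof---identify the surgered manifold with $L\#k(S^1\times S^2)$, apply Theorem~\ref{fukimp} to force it to be $S^1\times\Sigma_g$, and then read off $(L,k)=(S^3,1)$---but carries out the last step differently. The paper argues with fundamental groups: $\pi_1\bigl(L\#k(S^1\times S^2)\bigr)$ is the free product of $\pi_1(L)$ with $k$ copies of $\ZZ$, and a nontrivial free product has trivial centre whereas $\pi_1(S^1\times\Sigma_g)$ contains a central copy of $\ZZ$; this forces $\pi_1(L)=1$ and $k=1$, after which Poincar\'e--Perelman gives $L\cong S^3$. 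Your route via primeness of $S^1\times\Sigma_g$ and Kneser--Milnor uniqueness of prime decompositions reaches the same conclusion just as cleanly, trading the group-theoretic observation about centres for the $3$-manifold fact that the products $S^1\times\Sigma_g$ are prime.
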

\begin{proof}
The Lagrangian surgery procedure affects $L$ by taking the connected sum with $k$ copies of $S^1\times S^2$. The fundamental group of the surgered Lagrangian is then a free product of $\pi_1(L)$ with $k$ copies of $\ZZ$. Unless $\pi_1(L)=\{1\}$ and $k=1$ this cannot be the fundamental group of a product 3-manifold. By the Poincar\'{e}-Perelman theorem this means that $L$ must be a 3-sphere with one double point (indeed such an immersion exists, the Whitney immersion, and it admits a resolution Lagrangian isotopic to a monotone $S^1\times S^2$).
\end{proof}

\section{An h-principle for monotone Lagrangian immersions}\label{hprin}
Often, monotonicity is introduced as a technical tool to make Floer theory work and as such it seems perverse to prove restrictions on the topology of monotone Lagrangians for their own sake. The purpose of this section is to observe that there is an h-principle for monotone Lagrangian immersions. This means that the `topological restrictions' proved above are really measuring the failure of an h-principle for embeddings, which makes them more than just idle curiosities. The h-principle also allows us to construct many interesting monotone Lagrangian embeddings after `stabilising' by taking products with $S^1$.
\subsection{The h-principle}
If $L$ is an $n$-manifold and $L\looparrowright\CC^n$ is a Lagrangian immersion it defines a \emph{Gauss-map} $\widehat{df}:L\rightarrow\Lambda$ to the Lagrangian Grassmannian. More generally, given an isotropic bundle monomorphism $F:TL\rightarrow\CC^n$ we have a Gauss-map $\hat{F}:L\rightarrow\Lambda$. One can pull back the Maslov class $\mu\in H^1(\Lambda,\ZZ)$ along $\hat{F}$ and obtain a class $\mu_F\in H^1(L;\ZZ)$. We call this the Maslov class of $f$ if $F=df$.
\begin{dfn}
Fix a real number $K$. Let $\lambda$ denote the 1-form $\sum q_idp_i$ on $\CC^n$ and $\omega=d\lambda$ the standard symplectic form. We say a Lagrangian immersion
\[f\colon L\looparrowright\CC^n\]
is $K$-monotone if $[f^*\lambda]=\frac{\pi K^2}{2}\mu_{df}$, and that $f$ is monotone if it is $K$-monotone for some $K\geq 0$. Note that since $f$ is Lagrangian $f^*\lambda$ is a closed 1-form so its cohomology class makes sense.\end{dfn}
\begin{thmmain}[h-Principle for $K$-monotone Lagrangian immersions into $\CC^n$]
Fix a number $K$. Suppose we are given a map $f:L\rightarrow \CC^n$ and an isotropic bundle monomorphism
\[
\begin{CD}
TL@>{F}>>T\CC^n\\
@VVV @VVV\\
L@>>{f}>\CC^n
\end{CD}
\]
Then there exists a Lagrangian immersion $\phi:L\rightarrow\CC^n$ which is $\mC^0$-close to $f$ and such that $[\phi^*\lambda]=\frac{\pi K^2}{2}\mu_F$.
\end{thmmain}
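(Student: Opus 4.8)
The plan is to split the construction into two logically independent steps: first realise the formal data by an honest Lagrangian immersion using the classical h-principle, and then correct its Liouville class by a $\mC^0$-small corrugation that leaves the Maslov class untouched.

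For the first step I would invoke the Gromov--Lees h-principle \cite{Lees}. Since $F$ is an isotropic monomorphism of the rank-$n$ bundle $TL$ into $T\CC^n$ it is automatically a Lagrangian frame, so there is a Lagrangian immersion $\phi_0\colon L\to\CC^n$ which is $\mC^0$-close to $f$ and whose Gauss map is homotopic to $\hat F$; in particular $\mu_{d\phi_0}=\mu_F$. What Gromov--Lees does \emph{not} control is the Liouville class $[\phi_0^*\lambda]\in H^1(L;\RR)$, so after this step the problem reduces to the following: starting from $\phi_0$, produce a $\mC^0$-small deformation through Lagrangian immersions, preserving the homotopy class of the Gauss map (hence keeping $\mu_{d\phi}=\mu_F$), which adds the prescribed correction class $c=\tfrac{\pi K^2}{2}\mu_F-[\phi_0^*\lambda]$ to the Liouville class. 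Combined with $\mu_{d\phi}=\mu_F$, this is exactly $\mC^0$-closeness together with $K$-monotonicity.

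The heart of the argument is therefore a corrugation lemma. The naive attempt --- deforming $\phi_0$ to the graph of a closed $1$-form $\beta$ in a Weinstein neighbourhood, so that $[\phi^*\lambda]$ changes by $[\beta]$ --- fails, because a $\mC^0$-small $\beta$ has small periods and so cannot realise a large class $c$. The resolution is to let the Lagrangian fold in a symplectic $2$-plane. I would fix disjoint embedded circles $\gamma_1,\dots,\gamma_b$ in $L$ whose classes form a basis of $H_1(L;\RR)$, take thin tubes $T_i\cong\gamma_i\times B^{n-1}$, and work in a Weinstein--Darboux chart along an embedded arc of $\gamma_i$ in which $\phi_0$ is the standard real subspace. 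There I modify only the first complex coordinate: replace the arc $x_1\mapsto x_1$ by an immersed curve $\zeta(x_1)=a(x_1)+\sqrt{-1}\,b(x_1)\in\CC_1$ tracing, $N$ times, a small \emph{asymmetric figure-eight} of amplitude $\epsilon$, while freezing the transverse coordinates $x_2,\dots,x_n$. Because $\zeta$ depends on $x_1$ alone, the tangent $n$-plane is $\RR\zeta'(x_1)$ times a fixed real $(n-1)$-plane, so the deformation stays isotropic and one may taper $\zeta$ in the $x_1$-direction back to $\phi_0$ near $\partial T_i$ without destroying isotropy. Two features of the figure-eight do the work: it has \emph{turning number zero}, so the line $\RR\zeta'(x_1)$ winds a net zero times in $\mathbb{RP}^1(\CC_1)$ and $\mu_{d\phi}=\mu_F$ is preserved; and it has unequal lobes, hence \emph{nonzero signed area} $A_1-A_2$, so the Liouville period picks up $\int_{\gamma_i}\phi^*\lambda-\int_{\gamma_i}\phi_0^*\lambda=\int a\,b'\,dx_1\approx\epsilon^2 N(A_1-A_2)$.

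The main obstacle, and the only quantitative point, is precisely this estimate: one must check that the accumulated signed area grows linearly in the number $N$ of corrugations while the amplitude $\epsilon$ --- and hence the $\mC^0$-distance to $\phi_0$ --- stays fixed, and that the turning number remains zero throughout the tapering. Granting it, the deformation supported in $T_i$ changes $\phi^*\lambda$ by a closed $1$-form whose period over $\gamma_i$ is the tunable quantity $\int a\,b'\,dx_1$ and whose period over the $\gamma_j$ with $j\ne i$ vanishes (these being disjoint from $T_i$); the resulting period matrix is diagonal. Varying $\epsilon$ continuously and choosing the orientation of the profile for the sign, I can make the period over $\gamma_i$ equal to $c(\gamma_i)$ exactly, for each $i$. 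The result is a Lagrangian immersion $\phi$ that is $\mC^0$-close to $f$, satisfies $\mu_{d\phi}=\mu_F$, and has $[\phi^*\lambda]=\tfrac{\pi K^2}{2}\mu_F$, as required.
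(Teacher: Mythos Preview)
Your corrugation step has a genuine gap. The modification $(x_1,x')\mapsto(\zeta(x_1),x')$, with $x'$ frozen and $\zeta$ tapered only at the ends of the arc, does not match $\phi_0$ on the lateral boundary $\{|x'|=\delta\}$ of the chart, so it does not glue to a continuous map on $L$. And you cannot cure this by also tapering in $x'$: once the modification is honestly supported in the tube $T_i=\gamma_i\times B^{n-1}$ with $\Phi=\phi_0$ on all of $\partial T_i$, the period over $\gamma_i$ is forced to be unchanged, because $\gamma_i=\gamma_i\times\{0\}$ is homologous in $T_i$ to the parallel circle $\gamma_i\times\{p\}\subset\partial T_i$, whence $\int_{\gamma_i}\Phi^*\lambda=\int_{\gamma_i\times\{p\}}\phi_0^*\lambda$ regardless of how wildly you wiggle in the interior. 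Equivalently, $\Phi^*\lambda-\phi_0^*\lambda$ is a closed $1$-form compactly supported in the tube, hence exact. A tube around $\gamma_i$ is simply the wrong support for this construction.

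Your idea can be repaired by corrugating instead in a collar $H_i\times(-\epsilon,\epsilon)$ of a \emph{closed hypersurface} $H_i$ Poincar\'e dual to $\gamma_i$: now $\gamma_i$ crosses from one boundary component of the collar to the other and is not homologous to anything on the boundary, the product $(h,s)\mapsto(0_h,\zeta(s))$ in $T^*H_i\times T^*(-\epsilon,\epsilon)\cong T^*L|_{\mathrm{collar}}$ is Lagrangian with no transverse boundary to worry about (since $H_i$ is closed), and your figure-eight area/turning-number computation goes through verbatim. The paper, however, bypasses all of this. It lifts the formal data to the contact manifold $\CC^n\times(\RR/\pi K^2\ZZ)$ with contact form $dz-\lambda$, invokes the h-principle for Legendrian immersions (this is \cite[Section~16.3.1]{EM}, with periodic $z$ replacing $\RR$), and projects back to $\CC^n$. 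The Legendrian condition gives $\phi^*\lambda=\tilde\phi^*dz$, and the $\pi K^2$-periodicity of $z$ then delivers the correct Liouville class automatically, with no explicit corrugation required.
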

\begin{proof}
The proof is an adaptation of the proof of the result in Section 16.3.1 of \cite{EM}, which is itself the $K=0$ case (exact Lagrangians). We prove the case $K>0$ here. Note that the isotropic bundle monomorphism has a natural lift $\tilde{F}$ to an isotropic bundle monomorphism into the contact distribution of the contact manifold $\CC^n\times [0,\pi K^2]/\sim$ (where $z\sim z+\pi K^2$) with contact form $dz-\lambda$ ($z$ is the periodic coordinate), so the problem of finding a $K$-monotone Lagrangian is translated into the problem of finding a Legendrian immersion. Any such Legendrian immersion projects to a $K$-monotone Lagrangian immersion in $\CC^n$.
\end{proof}
\subsection{Applications}\label{h-prin-appl}
We now show how to use the monotone Lagrangian immersions constructed by the above h-principle to construct monotone Lagrangian embeddings in higher dimensions.
\begin{prp}[c.f. {\cite[Proposition 1.2.3]{ALP}}]\label{ALP}
If $\iota_1\colon L_1\looparrowright\CC^n$ is a $K$-monotone Lagrangian immersion and $\iota_2\colon L_2\rightarrow\CC^m$ is a $K$-monotone Lagrangian embedding then there is a $K$-monotone Lagrangian embedding $\iota_3\colon L_3=L_1\times L_2\to\CC^{n+m}$.
\end{prp}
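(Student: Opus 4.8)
The plan is to adapt the stabilisation construction of Audin--Lalonde--Polterovich \cite{ALP}, resolving the self-intersections of the product immersion by using the room provided by the embedded factor. Since a closed embedded Lagrangian in $\CC^m$ cannot be exact (Gromov \cite{Gro}), the only relevant case is $K>0$, which I would treat (the $K=0$ hypothesis being vacuous). First I would observe that the product $\iota_1\times\iota_2\colon L_1\times L_2\looparrowright\CC^{n+m}$ is automatically a $K$-monotone Lagrangian immersion: the symplectic form, the Liouville form $\lambda$, and the Maslov class are all additive across the two factors, so $[(\iota_1\times\iota_2)^*\lambda]=\tfrac{\pi K^2}{2}\mu$ follows from the corresponding identities for $\iota_1$ and $\iota_2$, with the \emph{same} constant $K$ precisely because both factors are $K$-monotone. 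Its self-intersection locus is $\mathrm{Sing}(\iota_1)\times L_2$, because $\iota_2$ is injective; this is a disjoint union of copies of $L_2$, one for each double point of $\iota_1$, so the double points are no longer isolated and must be removed.

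To resolve them I would pass to the contactisation $\CC^n\times S^1$ with contact form $dz-\lambda$ and $S^1=\RR/\pi K^2\ZZ$, exactly as in the proof of the h-principle above, lifting $\iota_1$ to a Legendrian immersion with height function $z_1\colon L_1\to S^1$ satisfying $dz_1=\iota_1^*\lambda$. After a $\mC^0$-small perturbation I may assume that the two preimages of each double point of $\iota_1$ have distinct heights, so that this lift is an \emph{embedding}; the height gap $z_1(a_1)-z_1(a_2)$ at each double point is then a fixed nonzero element of $S^1$. The embeddedness of $L_2$ lets me take a Weinstein neighbourhood of $\iota_2(L_2)$ identified with a neighbourhood of the zero-section in $T^*L_2$, and since $[\iota_2^*\lambda]=\tfrac{\pi K^2}{2}\mu_{L_2}$ is nonzero (again as $L_2$ cannot be exact) there is a corresponding nonzero class in $H^1(L_2)$ with height $z_2\colon L_2\to S^1$. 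The construction then couples the height $z_1(a)$ of the immersed factor to a fibrewise displacement of the $L_2$-copy inside $T^*L_2$ along this class, so that the two sheets over a double point of $\iota_1$ are pushed onto the graphs of distinct closed forms indexed by the distinct heights, and hence become disjoint.

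The subtlety is that such a displacement is not Lagrangian on the nose: coupling the $T^*L_2$-displacement to $z_1$ contributes a cross term $\tfrac12\,dz_1\wedge dz_2$ to the pulled-back symplectic form. I would cancel this by performing the construction \emph{symmetrically}, simultaneously deforming the $\CC^n$-factor by an amount coupled to $z_2(b)$ so as to produce the opposite cross term $\tfrac12\,dz_2\wedge dz_1$; since $dz_1\wedge dz_2=-dz_2\wedge dz_1$, the two contributions cancel and the resulting map $\Phi\colon L_1\times L_2\to\CC^{n+m}$ is genuinely Lagrangian. This is the Lagrangian-suspension mechanism of \cite{ALP}, in which the height functions play the role of generating Hamiltonians, and it is the equality of the contact periods $\pi K^2$ on the two factors that allows the two couplings to be matched.

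The hard part of the argument is the simultaneous control of all three properties — embedded, Lagrangian, and $K$-monotone — at once. Separation of the sheets wants the displacement to be large, being Lagrangian forces the precise height-matched coupling above, and $K$-monotonicity requires one to check that the correction terms introduced by the two couplings do not disturb the class $[\Phi^*\lambda]\in H^1(L_1\times L_2)$, so that it reproduces the product class $\tfrac{\pi K^2}{2}(\mu_{L_1}\oplus\mu_{L_2})=\tfrac{\pi K^2}{2}\mu_{L_1\times L_2}$; reconciling these three demands is exactly where the embeddedness of $L_2$ (needed both for the Weinstein neighbourhood and for a clean separating direction) and the common constant $K$ enter essentially. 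A secondary technical point, which I would settle by a compactness argument after fixing the size of the displacement, is to confirm that the chosen displacement keeps $\Phi$ globally injective — that the relevant heights remain distinct and the graphs remain disjoint over the whole compact double-point locus $\mathrm{Sing}(\iota_1)\times L_2$.
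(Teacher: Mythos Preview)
Your approach is genuinely different from the paper's, and it contains a gap at the separation step. You propose to resolve the double points by pushing the two $L_2$-sheets over a double point of $\iota_1$ onto the graphs in $T^*L_2$ of two \emph{distinct} closed $1$-forms (multiples of $dz_2$, say), and then assert ``hence become disjoint''. But graphs of two closed $1$-forms $\alpha_1,\alpha_2$ on $L_2$ are disjoint if and only if $\alpha_1-\alpha_2$ is \emph{nowhere vanishing}; being merely distinct in cohomology is not enough. Your coupling form $dz_2$ comes from the class $[\iota_2^*\lambda]=\tfrac{\pi K^2}{2}\mu_{L_2}$, and there is no reason this class should contain a nowhere-vanishing representative --- by Tischler's theorem that would force $L_2$ to fibre over $S^1$, which is not assumed. (It happens to hold in the main application $L_2=S^1$, which may be why the idea seemed plausible.) A related difficulty is that your separating parameter $z_1(a_1)-z_1(a_2)$ lives in the circle $\RR/\pi K^2\ZZ$, so ``making the displacement large'' is not available either. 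The symmetrisation you describe to cancel the cross term $dz_1\wedge dz_2$ is also only sketched, and it is not clear it can be carried out while keeping the map globally defined and injective.

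The paper bypasses all of this with a much more elementary device: it takes a Weinstein neighbourhood $U\subset T^*L_1$ of the \emph{immersed} factor and an immersion $I\colon U\to\CC^n$ extending $\iota_1$, and then separates the sheets using an arbitrary function $f\colon U\to\RR^m\subset\CC^m$ into the \emph{real} Lagrangian subspace, setting $I_r(u,x)=(I(u),\,x+f(u))$ on $U\times B_r(0)$. Because $\RR^m\subset\CC^m$ is Lagrangian, translation by $f(u)$ is symplectic, so $I_r$ is a symplectic embedding with no cross terms to cancel; because $\lambda_m$ vanishes on $\RR^m$, the Liouville class is unchanged and $K$-monotonicity of $L_1\times L_2$ follows directly from additivity. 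The embedded factor $L_2$ enters only by sitting inside $B_r(0)$ after rescaling $f$. No Legendrian lifts, no nowhere-vanishing $1$-forms, and no topological hypothesis on $L_2$ are needed.
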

\begin{proof}
We first observe that by Weinstein's neighbourhood theorem there is a neighbourhood $U\subset T^*L_1$ of the zero section in $T^*L_1$ and an immersion $I\colon U\to\CC^n$ extending the map $\iota_1$ (identifying $L_1$ with the zero-section). Let $f\colon U\to\RR^m\subset\CC^m$ be a function such that $(I,f)\colon U\to\CC^{n+m}$ is an embedding. Let $B_r(p)$ be the radius $r$ symplectic ball centred at $p\in\CC^m$ and let $\phi_{r,p}:B_r(0)\rightarrow B_r(p)$ be the translation diffeomorphism. Define the map
\[I_r\colon U\times B_r(0)\to\CC^{n+m},\ I_r(u,x)=(I(u),\phi_{r,f(u)}(x))\]
Since $\RR^m\subset\CC^m$ is Lagrangian it is not hard to check that this map is symplectic and moreover, for small enough $r$, it is a symplectic embedding. However, by rescaling $f$ we can assume $r$ is arbitrarily large, in particular we can assume that $L_2$ is contained in $B_0(r)$. Now consider the Lagrangian $L_1\times L_2\subset U\times B_r(0)$. To see that its image $L_3=I_r(L_1\times L_2)$ is $K$-monotone, observe that $\pi_2(\CC^{n+m},L_3)=\pi_1(L_1)\times\pi_1(L_2)$. If $(\gamma_1,\gamma_2)\in\pi_1(L_1)\times\pi_1(L_2)$ and $\lambda_k$ denotes the Liouville form on $\CC^k$ then
\[(I_r^*\lambda_{m+n})(\gamma_1,\gamma_2)=\lambda_n(\gamma_1)+\lambda_m(\gamma_2)\]
where we use the fact that $f$ lands in $\RR^m\subset\CC^m$ on which $\lambda_m$ vanishes. Similarly one can see that
\[\mu_3(\gamma_1,\gamma_2)=\mu_1(\gamma_1)+\mu_2(\gamma_2)\]
where $\mu_i$ is the Maslov homomorphism on $\pi_1(L_i)$. In fact, one can give a simple formula relating the three Lagrangian Gauss maps $L_i\to\Lambda_i$ (see {\cite[Theorem 2.6]{Fuk}}). Since both factors have the same monotonicity constant $K$ we see that $L_3$ is $K$-monotone.
\end{proof}

Interesting examples of monotone Lagrangian immersions $\iota_1\colon L_1\looparrowright\CC^n$ are provided by the h-principle (for example $L_1$ could be any 3-manifold). Taking $\iota_2\colon S^1\to\CC$ as the standard Lagrangian embedding of the unit circle and applying Proposition \ref{ALP} now gives monotone Lagrangian \emph{embeddings} $\iota_3\colon L\times S^1\to\CC^{n+1}$. Indeed, since one can find Lagrangian immersions of surfaces $\Sigma_g\looparrowright\CC^2$ \cite{Giv} we see that there are monotone Lagrangian embeddings $S^1\times\Sigma_g\to\CC^3$ for all genera and hence Theorem \ref{fukimp} is sharp.
\subsection*{Acknowledgements}
The authors would like to acknowledge helpful discussions with Paul Biran, Octav Cornea and Janko Latschev. Enormous thanks also go to Mihai Damian and an anonymous referee for pointing out the necessity of a result like Lemma \ref{topo} when we tried to define our moduli space using based rather than free homotopy classes. Also, this paper owes a huge intellectual debt to the papers \cite{Dam} and \cite{Fuk} of Damian and Fukaya. Ivan Smith first asked J.E. if the connected sum of two hyperbolic manifolds would embed as a Lagrangian in a uniruled manifold and he still doesn't know the answer. J.E. is supported by an ETH Postdoctoral Fellowship. Our collaboration is supported by EPSRC grant EP/I036044/1.

\end{document}